\DeclareRobustCommand{\SkipTocEntry}[5]{}
\numberwithin{equation}{section}
\newtheorem{theorem}{Theorem}[section]
\newtheorem{lemma}{Lemma}[section]
\newtheorem{corollary}{Corollary}[section]
\newtheorem{prop}{Proposition}[section]
\theoremstyle{remark}
\newtheorem{remark}{Remark}[section]
\theoremstyle{remark}
\newtheorem{example}{Example}[section]
\newcommand{\R}{\mathbf{R}}
\DeclareMathOperator{\E}{\mathbf{E}}
\newcommand{\lag}{\mathcal{L}}
\newcommand{\G}{\mathcal{G}}
\DeclareMathOperator*{\argmax}{\text{arg\,max}}
\DeclareMathOperator*{\argmin}{\text{arg\,min}}
\newcommand{\F}{\mathscr{F}}
\newcommand{\eps}{\varepsilon}
\newcommand{\V}{\mathbf{V}}
\title{The relation of bias with risk in empirically constrained inferences}
\author{Dalton A R Sakthivadivel}
\address{Department of Mathematics, CUNY Graduate Centre, 365 Fifth Avenue, New York, NY 10016}
\email{dsakthivadivel@gc.cuny.edu}
\urladdr{https://darsakthi.github.io}
\date{\today}
\subjclass[2020]{60F10, 62B10, 62C10, 82M60}
\let\oldtocsection=\tocsection
\let\oldtocsubsection=\tocsubsection
\renewcommand{\tocsection}[2]{\hspace{0em}\oldtocsection{#1}{#2}}
\renewcommand{\tocsubsection}[2]{\hspace{19.5pt}\oldtocsubsection{#1}{#2}}
\begin{document}

\begin{abstract}

We give some results relating asymptotic characterisations of maximum entropy probability measures to characterisations of Bayes optimal classifiers. Our main theorems show that maximum entropy is a universally Bayes optimal decision rule given constraints on one's knowledge about some observed data in terms of an expected loss. We will extend this result to the case of uncertainty in the observations of expected losses by generalising Sanov's theorem to distributions of constraint values. 

\end{abstract}


\maketitle

\section{Introduction}

Given the probability space $(\Omega, \F, \mu)$, $\Omega$ a Polish space, let $X \sim P \coloneqq \mu_*$ be a random variable $X \colon \Omega \to M$, and $Y \colon \Pi \to N$ on $(\Pi, \mathscr{G}, \nu)$ be a {\it label} associated to $X$ by some unknown $\mu_*$-measurable 
mapping $f \colon X \mapsto Y$. One is often interested in approximating $f$ by some hypothetical parametric function $h_\theta \colon X \mapsto Y$, classifying observed values of $X$ according to their `best possible' labels. Given a loss function $\lag(h_\theta(x), y)$, the {\it risk of misclassification} or {\it generalisation error} is the expected error on the joint measure space
\[
    R(h_\theta) = \int_N\int_M \lag(h_\theta(x), y) p(x, y) \dd x \dd y.
\]
Under obvious lower semi-continuity assumptions on the risk there is a unique classifier $g$ of least risk such that for any $h_\theta$, 
\[
    R(h_\theta) \geqslant R(g).
\]
We call $g$ the Bayes classifier. It can be shown that the output of the Bayes classifier is determined by minimising the conditionally expected error,
\begin{equation}\label{expected-error}
    g(x) = \argmin_z \int_N \lag(z, y) p(y \mid x) \dd y.
\end{equation} 
In this case, one often says $g$ is Bayes optimal with respect to the prior probability of $y$, using the Bayesian inversion $p(x \mid y)\pi(y)$. For standard references see \cite{berger2013statistical, mackay2003information, shalev2014understanding}.

We claim one can turn this story around and see it as the inference of a posterior consistent with our knowledge of (i) the value of $X$ and (ii) the loss function on possible classes associated to $X$. In other words, from the existence of a Bayes classifier follows an optimal `guess' of the distribution over $Y$ given the information we have about $X$, such that the loss function and the distance of $h_\theta$ from $g$ constrains the form of $P(Y \mid X)$. The results here are motivated by this claim and the accompanying question: how should that inference be performed---what is the optimal such guess? Moreover, what determines optimality in this context?

The principle of maximum entropy---in its asymptotic form---has obvious relevance, envisioned by {\it e.g.} Skyrms as a method for supposition which is compatible with Bayesian model evidence evaluation \cite{skyrms1987updating} and by Shore--Johnson as an inference procedure consistent with some given knowledge but including no extraneous assumptions \cite{shore1980axiomatic}. We will presently describe how these properties are relevant to the matter at hand.

The concept behind \eqref{expected-error} is that for a given $X=x$ we want to predict a $z$ and compare it against any given $Y=y$ using $\lag$, then consider the total result across all $y$'s possibly associated to $x$, weighted by their likelihoods. Note that this lattermost property of \eqref{expected-error} can be thought of as controlling the risk of overfitting the classifier to a rare observation of $Y$ given $X$. The classifier which minimises this expected loss is the Bayes classifier. An example is that of a quadratic loss function $\lag(h_\theta(x), y) = (h_\theta(x) - y)^2$, in which case the best value for $h_\theta(x) = z$ can be computed as $\E[Y \mid X]$. By pure reasoning, one knows this is equal to the value of $Y$ observed on average given $X$, so minimises the conditionally expected difference between classifier outputs and values of $Y$, which is \eqref{expected-error}. The computation follows precisely this logic by setting the derivative of \eqref{expected-error} with respect to $z$, $2\E[z - y \mid X]$, equal to zero, and noting this is equivalent to $z - \E[Y \mid X] = 0$ by linearity and independence of the variable $z$ from $y$.

Suppose we make a random choice of $z$ weighted according to $\lag$ and assume the image of the classifier is uniquely determined by $\theta$. Obviously the distribution of classifier outputs is not the true distribution of labels, since $y$ is fixed. {\it Conversely}, however, if we fix a classifier and can measure its error empirically, then we may ask for the most probable distribution of possible $Y$ values given (i) a value $X = x$ and (ii) our knowledge that $h_\theta(x)$ has some conditionally expected error $\E[\lag(h_\theta(x), y) \mid x] = \xi$. Furthermore, if we know the minimal error $\xi^*$, we can then compare that distribution to the distribution we would obtain if $h_\theta(x)$ were the Bayes classifier so that $\xi = \xi^*$, using the relative entropy or the distance between $\xi$ and $\xi^*$ to estimate how far $h_\theta$ is from $g$ (indeed these should be the same procedures in some suitable sense). That is to say: on one hand, we can determine what the true distribution most likely is if we know $\xi$. On the other, we can ask what the statistics of the distribution would need to be for a fixed $h_\theta(x)$ to be its Bayes classifier. Together, we can determine the minimum possible divergence between the true posterior and any inferred posterior given $h_\theta(x)$, and how that divergence varies with respect to changing $\theta$. 


From this we claim finding the Bayes optimal classifier empirically involves inferring the true posterior and then studying large deviations in $P_\theta(Y \mid X)$ constrained by an empirically expected error $\xi$. By Sanov's theorem, the probability of deviations in empirical measures from a true measure are dominated by their closest points to that true measure, measured by the relative entropy. Likewise it is known that under certain {\it desiderata} on an inference (outlined in \cite{shore1980axiomatic} and reviewed in the following section) the entropy
\[
    - \int P \log P - \lambda \left( \int V P - C\right)
\]
is singled out as the functional to be extremised, where given a reference measure $Q$, $V$ is any measurable function with growth such that $e^{-\lambda_C V}$ is locally $L^1$ with respect to $Q$, for some value of $\lambda$ determined by $C$. 

We will here discuss a collection of results linking the optimality of the Bayes classifier with the optimality of the principle of maximum entropy as a form of constrained inference. In particular, our proofs will use the theory of large deviations to link constrained probabilities with {\it maximum a posteriori} estimates. The generality of our framework goes well beyond quadratic utility functions, addressing {\it e.g.} a critique of Berger concerning the use of mean squared error. On the other hand, we will only consider unimodal distributions; that is, we will assume $\lag$ is lower semi-continuous, convex, and at least twice continuously differentiable. We will assume the extremal point of the relative entropy is also unique. Necessary and sufficient conditions for this assumption to hold are given in \cite{csiszar}. We will finally implicitly assume absolute continuity wherever necessary, {\it e.g.} in discussions of the relative entropy. We will also assume the true measure and true labels are known, say, for the purpose of training or fitting parametric distributions to the true measure.

We are grateful to K Dill and K Friston for many useful discussions around this topic. The author acknowledges support from the VERSES Research Lab and the Einstein Chair programme at the Graduate Centre of the City University of New York.

\section{A review of large deviations and maximum entropy principles}\label{review-sec}

In this section we will review large deviations theory and the principle of maximum entropy in the setting of large deviations. Excellent references include \cite{dembo2009large, varadhan2016large} and \cite[\S 24]{kallenberg1997foundations}. Throughout we will abbreviate formulae by neglecting to write base measures when doing so is inessential. 

Suppose we have a random variable parametrised by some quantity $n$. It is natural to ask how the probability of any set varies with $n$, and to try and obtain asymptotic statements about $P_n(X \in A)$ as $n \to \infty$ or $n \to 0$. Let $c_n$ be a sequence of real numbers which diverges as $n\to 0$, and let $I$ be a lower semi-continuous function on $M$. The sequence $\{P_n\}$ satisfies a large deviations principle with rate function $I$ if and only if, for all Borel sets $A$, the probability of fluctuating away from the minimiser of $I(A)$ decays exponentially in $n$, to leading order in $n$. In particular, for $A^\circ$ the interior of $A$ and $\bar{A}$ its closure, we have
\[
    - \inf_{x \in A^\circ} I(x) \leqslant \liminf_{n\to0} \frac{1}{c_n} \log P_n(X \in A) \leqslant \limsup_{n\to0} \frac{1}{c_n} \log P_n(X \in A) \leqslant -\inf_{x\in\bar{A}} I(x),
\]
or more concisely (if $A$ is a closed convex set), 
\[
    \lim_{n\to0} \frac{1}{c_n} \log P_n(X \in A) = - \inf_{x\in A} I(x).
\]
In effect this states that the dominant behaviour of the probability that $X$ takes a value near some $x$ is $\exp(-c_n I(x))$. Moreover, the error is sub-exponential in $n$ (including in the tails), and that in the limit $c_n \to \infty$, the measure concentrates on $x^* = \argmin I(x)$. Hence a large deviations principle describes a situation where the probability of observing a value of a random variable which is `far' from a typical value decays exponentially to first order in some parameter $n$. We denote the above using the shorthand
\[
    P(X \in A) \asymp e^{-c_n \inf_{a \in A} I(a)}
\]
to suggest we are taking the Laplace approximation of the mass of $P$ by its dominant points (see \cite[\S E7]{touchette2009large} for example). 

If the law of $X$ satisfies a large deviations principle with rate function $I$ then any measurable function $f$ of $X$ defining a random variable $Y$ has a pushforward measure, also satisfying a large deviations principle with rate $J$, obtained as 
\[
    J(y) \coloneqq \inf_x \{ I(x) \mid f(x) = y\}
\]
by using the Laplace approximation of the pushforward measure. We can interpret this two ways: (i) thinking of the rate function as a measurement of improbability, the probability of $y$ is identically the probability of the least improbable $x$'s such that $f(x) = y$; or, (ii) the satisfactory $x$'s are the most probable atypical events given that $f(x) = y$ (a non-empty subset of the $\sigma$-algebra of $\Omega$ by assumption). On the subject of how (i) relates to (ii), more can be said. In the primal form, we can express the satisfactory $x$'s as solving an unconstrained variational problem
\[
    \inf_{x\in A} \sup_\beta I(x) + \beta(f(x) - y)
\]
where $\beta \in \R$ is a Lagrange multiplier enforcing the constraint, the minimised quantity having the expression 
\[
    \sup_\beta I(x) + \beta(f(x) - y) = 
        \begin{cases}
            I(x) & \text{if } f(x) = y\\
            \infty & \text{otherwise}
        \end{cases}
\]
from choosing $\beta = \infty$ when $f(x) - y > 0$ such that the supremum is infinite ($\beta = -\infty$ when $f(x) - y < 0$, respectively). Now observe that, from Bayes' theorem,
\[
    P(X \in A \mid f(X) = y) = \frac{P(A \cap \{x: f(x) = y\})}{P(\{x: f(x) = y\})}.
\]
The most probable $x$ in $A$ hence minimise, if $f(x) = y$, the total rate function
\[
    I_y(x) \coloneqq \inf_{x \in A \cap \{x:f(x) = y\}} I(x) - \inf_{\{x:f(x) = y\}} I(x) \geqslant 0.
\]
One sees that $\inf_{x \in A} I_y(x)$ occurs when the least costly $x \in A$ such that $f(x) = y$ is the least costly $x$ such that $f(x) = y$; in other words, writing 
\[
    \inf_{x \in A \cap \{x:f(x) = y\}} I(x) \geqslant \inf_{\{x:f(x) = y\}} I(x),
\]
the infimum of $I_y(x)$ is attained when we do no better on the parts of $\{x : f(x) = y\}$ not contained in $A$ than we do on $A \cap \{x : f(x) = y\}$. Note that for those parts of $A$ not contained in $\{x : f(x) = y\}$, $I(x)$ is infinite, such that $I_y(x)$ diverges by construction---so there is no way to improve the rate function by leaving $\{x : f(x) = y\}$. Indeed, as established above, the primal rate function is equivalent to a rate function on a constrained support. It follows that 
\[
    \inf_{x \in A} I_y(x) = \inf_{x \in A \cap \{x:f(x) = y\}} I(x)
\]
and therefore
\[
    J(y) = \inf_{x \in A} I_y(x).
\]
Hence the probability of $y$ is determined by the least improbable $x$ in $A$ such that $f(x) = y$. We will use these facts in \S\ref{max-ent-uncertainty-section}---specifically in the proof of Theorem \ref{max-likelihood-thm}.

Contraction is of particular interest when we wish to study how large deviations in the statistics of random measures are determined by large deviations in those measures. Let $L_n$ be the empirical vector of $n$ independent observations of identically distributed random variables $X$ valued in a Polish space, and $V \cdot L_n$ be the empirical expectation
\[
    \int V(x) L_n(x) \dd x.
\]
It is known that (under suitable conditions) the law of $L_n$ satisfies a large deviations principle in the space of measures (itself a Polish space) equipped with the topology of weak convergence, given by
\begin{equation}\label{ldp-sanov-eq}
    \lim_{n\to\infty} \frac{1}{n} \log P^n(L_n \in K) = -\inf_{L_n \in K} D_{\mathrm{KL}}(L_n \| P)
\end{equation}
where $P$ is the law of $X$; that is to say, it has log-probability equal (to first order in $n$) to the negation of the minimal possible divergence between $P$ and $K$ in the space of measures \cite{sanov}. Given the rate function for $L_n$ we can also consider the probability of $L_n$ conditioned to lie in a `rare set' with certain statistics $V \cdot L_n = c$. Contracting \eqref{ldp-sanov-eq} to a large deviations principle for empirical expectations entails taking
\[
    E(c) = \inf_{\mu : V\cdot \mu = c} D_{\mathrm{KL}}(\mu \| P)
\]
thus expressing $\inf_c E$ (the rate function for averages of $V$) in terms of the rate function for $L_n$. The $L_n$ solving this constrained optimisation problem minimise the primal function 
\begin{equation}\label{global-rate-eq}
\sup_\beta \big( D_{\mathrm{KL}}(\mu \| P) - \beta(V\cdot \mu - c) \big) = 
    \begin{cases}
        D_{\mathrm{KL}}(\mu \| P) & \text{if } V \cdot \mu = c\\
        \infty & \text{otherwise}
    \end{cases}
\end{equation}
and by convex duality, we can equivalently solve
\[
    \sup_\beta \inf_\mu\big( D_{\mathrm{KL}}(\mu \| P) - \beta(V\cdot \mu - c) \big),
\]
which is the free energy. It is possible to prove that the constraint is equivalent to contracting to the Cram\'er rate function for empirical means, such that we need to minimise
\[
D_{\mathrm{KL}}(\mu \| P) - \inf_{\mu : V\cdot \mu = c} D_{\mathrm{KL}}(\mu \| P)
\]
occurring when the KL divergence of $\mu$ is equivalent to the intended expectation of $V$. The takeaway in either case is that $L_n$ converges to the measure which minimises the relative entropy subject to our constraint \cite{csiszar}. That is to say, if we consider an improbable set given by the constraints, then the {\it least improbable} improbable measure is the closest one (in information) to $P$, with probability the exponential of that `distance'.

This observation is of great importance in statistical mechanics, where it describes the asymptotic behaviour of functions of states of a system in the so-called thermodynamic limit \cite{ellis2006entropy, touchette2009large}. 
One way to think of statistical mechanics is as a method for reconstructing laws of thermodynamics from microscopic mechanical laws suitably `averaged'. Another is that in statistical mechanics one is driven by a problem of inference. Suppose we have a system of $k$ interacting particles described by position states in three-space. The configuration of the system is a $3k$-dimensional vector called a microstate of the system. In thermodynamics we can measure coarse-grained variables describing the ensemble of particles, such as temperature, pressure, and volume (macrostates of the system), but do not generally have access to the microstates when $k$ is large, or do not want the microstates if we are modelling the macrostates directly---say, for phenomenological reasons, such as modelling the results of laboratory experiment. On this view, the {\it ethos} of statistical mechanics is to infer the most probable microstates given a measurement of the average value of a macrostate. Indeed, such large deviations principles are called `Gibbs conditioning principles' in \cite{dembo1996refinements}.

The philosophy described previously is present most strongly in the work of Jaynes \cite{jaynes1957information, jaynes1980minimum} who proposed that finding an equilibrium state by maximising entropy should be thought of as parametrising the assignment of probabilities to microstates by macrostate values, whilst preserving all other randomness coming from our uncertainty about the system (see \cite{ken} for a review). Impressionistically, notice that \eqref{global-rate-eq} vanishes when the difference in information between $\mu$ and $P$---representing uncertainty we want to be resolvable with knowledge of the constraints---is equivalent to knowledge of the mean of $V$. Physically, the free energy (entropy constrained by an average of some observable) is a variational functional of probabilities of macrostates whose systematic variance with the distance of the system from equilibrium allows us to infer the most likely distribution of states, and in particular, allows us to make that parametrisation by considering the likely macrostate values of an equilibrium state, whilst also considering the volume of the system in phase space---our uncertainty about it. The principle of maximum entropy can thus be thought of as the least biassed method for consistently performing a constrained inference---axiomatised as such by Shore--Johnson \cite{shore1980axiomatic, presse2013nonadditive}. 

One should note that the content of Jaynes' work is much more general than statistical physics; the canonical nature of entropy as a form of measurement of information was arrived at by Shannon \cite{shannon}, Kullback--Leibler \cite{kullback1951information}, and Khintchine \cite{aleksandr1957mathematical} using purely probabilistic arguments; Jaynes and his contemporaries were aware of this, as discussed in {\it e.g.} \cite{jaynes1980minimum, williams1980bayesian, skyrms1987updating, jaynes1988relation, csiszar1991least, jaynes2003probability}. However, for physical conditions the thermodynamic entropy is proportional to the Shannon entropy (with constant of proportionality equal to Boltzmann's constant) \cite{jizba2020shannon}. A cogent discussion of this point can be found in \cite{presse}, and the observation there that the Shannon entropy is {\it a priori} independent of any statement about physics---whereas obtaining from it a thermodynamic entropy depends on the parameters of the thermodynamic state of the system being defined---goes back to \cite{jaynes1965gibbs}.

\section{Maximum entropy is Bayes optimal}

If we seek the most likely measure consistent with an estimated error value, that is, the most likely assignment of probabilities to classes such that the given error is observed, we will find the following.

\begin{theorem}\label{max-ent-bayes-thm}
    Empirically classifying posteriors associated to prescribed errors on predicted states is Bayes optimal in the limit of infinitely many samples if and only if the error function on predicted measures is the relative entropy.
\end{theorem}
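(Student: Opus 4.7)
The plan is to read the statement through the contraction of Sanov's large deviations principle established in \S\ref{review-sec}. Fix $X = x$. \enquote{Empirically classifying posteriors associated to a prescribed error} should mean taking the empirical measure $L_n$ of $n$ i.i.d.\ samples of $Y \mid X = x$, conditioning on the event $V \cdot L_n = \xi$ with $V(y) = \lag(h_\theta(x), y)$, and passing to the $n \to \infty$ limit. Sanov's theorem \eqref{ldp-sanov-eq} gives the rate function of $L_n$ as $D_{\mathrm{KL}}(\,\cdot\,\|\,P(Y \mid X = x))$, and contraction onto $\{\mu : V \cdot \mu = \xi\}$ via \eqref{global-rate-eq} shows the conditioned empirical measure concentrates on the unique (by the convexity and unimodality hypotheses) minimiser $\mu^* = \argmin D_{\mathrm{KL}}(\mu \| P(Y \mid X = x))$ subject to $V \cdot \mu = \xi$.

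For the sufficiency direction, take the error function on predicted measures to be relative entropy. In the Bayes-optimal case $\xi = \xi^*$, the constraint surface passes through the true posterior $P(Y \mid X = x)$, at which $D_{\mathrm{KL}}$ vanishes; hence $\mu^*$ equals the true posterior. Inserting $\mu^*$ into the conditional expectation in \eqref{expected-error} recovers the Bayes classifier output $g(x)$, so the empirical classification procedure is Bayes-optimal in the large-sample limit.

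For the necessity direction, suppose instead that the inference selects $\mu^\dagger = \argmin_\mu \Phi(\mu)$ subject to $V \cdot \mu = \xi$ for some functional $\Phi$ on predicted measures. Asymptotic Bayes optimality requires $\mu^\dagger$ to equal the almost-sure concentration point of the conditioned empirical measure for every admissible pair $(V, \xi^*)$. Sanov pins the latter uniquely to the KL minimiser, so the first-order optimality conditions of $\Phi$ and of $D_{\mathrm{KL}}$ must agree on every affine constraint surface of this form. Varying $V$ and $\xi^*$ then forces $\Phi$ to coincide with $D_{\mathrm{KL}}$ up to an affine reparametrisation that preserves argmins; invoking the Shore--Johnson axiomatisation cited in the introduction sharpens this to uniqueness of relative entropy.

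The main obstacle is the necessity direction, specifically delimiting the class of admissible functionals $\Phi$ and translating pointwise agreement of minimisers on every constraint surface into functional identity. If one restricts to lower semi-continuous convex functionals this reduces to a routine exercise in convex duality; otherwise one genuinely needs the Shore--Johnson uniqueness axiomatisation as an external input. The sufficiency direction, by contrast, is bookkeeping: Sanov's theorem, then contraction, then identification of the minimiser with the true posterior in the optimal case.
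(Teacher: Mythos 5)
Your opening step---Sanov plus contraction to identify the concentration point of the conditioned empirical measure with the $I$-projection onto the constraint set---is the same fact the paper obtains via Gibbs conditioning, namely that the limit is the Gibbs measure $e^{-\lambda_\xi \lag(h_\theta(x),y)}q$. After that the arguments diverge, and yours has a genuine gap in each direction. For sufficiency: the theorem's ``Bayes optimal'' refers to the decision problem one level up, where the classifier is the map $\chi\colon \xi \mapsto p(y\mid x)$ and the loss is $\G$ on predicted measures; optimality means the limiting output minimises the expected $\G$-loss, in exact analogy with \eqref{expected-error}. The paper's sufficiency is the observation that the concentration point (the Gibbs measure) is the $\argmin$ of $\G(\cdot, q)$ over the constraint set precisely when $\G$ is the relative entropy. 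Your argument instead specialises to $\xi = \xi^*$, identifies the minimiser with the true posterior, and recovers $g(x)$ of the base problem. In doing so the hypothesis ``the error function on predicted measures is the relative entropy'' does no work: you use relative entropy only in its role as the Sanov rate function, which it is for any choice of $\G$, and the case of a general prescribed $\xi \neq \xi^*$ is never addressed. Conflating the rate function with the loss function on measures is precisely the distinction the theorem is drawing.

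For necessity, your central claim---that agreement of argmins on every constraint surface, as $V$ and $\xi$ vary, forces $\Phi$ to coincide with $D_{\mathrm{KL}}$---is asserted rather than proved, and as stated it is false: any increasing reparametrisation $\phi(D_{\mathrm{KL}})$, or any functional sharing the level-set geometry near the minimisers, has identical argmins on every such surface, yet is not ``the relative entropy.'' You acknowledge this and fall back on Shore--Johnson, but the paper deliberately keeps that axiomatisation out of the proof (it is cited afterwards as independent corroboration). Its actual mechanism is narrower and concrete: restrict to losses of integral form $\G = \int f(p)\,p$, impose first-order stationarity at the Gibbs point, and solve the resulting functional equation $p f'(p) = -f(p)$ at $p = e^{-\lambda V}q$, whose solution is $f(p) = \log p/q + \lambda V - 1$, i.e.\ the relative-entropy integrand. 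A self-contained necessity argument needs some such restriction on the admissible class of $\G$ together with an explicit variational computation; matching minimisers alone cannot deliver the conclusion.
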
 
\begin{proof}
    Fix $h_\theta$, a reference measure $q$, and an error function $\lag$ on predicted states. Suppose we can empirically estimate the conditionally expected error $\xi\coloneqq L_n \cdot \lag(h_\theta(x), y)$. Let $\chi \colon \xi \to p(y \mid x)$ be a classifier mapping errors to random vectors. Define the measure $Q(p(y \mid x) \mid L_n \cdot \lag(h_\theta(x), y) = \xi)$. Let $\G(\chi(\xi), q)$ be the loss function on predicted measures. In the limit $n \to \infty$, by Gibbs conditioning, $Q \to \delta\left(e^{-\lambda_\xi \lag(h_\theta(x), y)}q \right)$. The expected loss on measures is then 
    \[
        \G(e^{-\lambda_\xi \lag(h_\theta(x), y)}q, q).
    \]
    The classifier is fixed by the limit. To minimise this expression, let $\G$ be the relative entropy. This proves sufficiency. To prove necessity, set $\G = \int f(p) p \dd{x}$ for $f$ an arbitrary functional of measures satisfying the Weierstrass conditions and any $V$. The variational derivative of $\G$ is 
    \[
        \partial_p \big( f(p) p\big) = f(p) + p \partial_p f(p).
    \]
    Now we solve the resulting differential equation for extremal values of $\partial_p \big( f(p) p\big)$ at the point $p = \exp(-\lambda V)q$:
    \[
        e^{-\lambda V(x)}q f' \left(e^{-\lambda V(x)}q \right) = - f\left(e^{-\lambda V(x)}q \right).
    \]
    By direct inspection $f(p) = \log p/q\, + \lambda V - 1$ is a nontrivial solution and by continuity this solution is unique.
\end{proof}

Theorem \ref{max-ent-bayes-thm} implies that any other choice of $f$ will lead to an increase in risk. It is shown by Shore--Johnson \cite{shore1980axiomatic} and Khintchine \cite{aleksandr1957mathematical} that the form of $f(p)$ is uniquely determined by four axioms for making unbiassed inferences. In particular any other choice of $f$ will produce spurious correlations between independent samples of a random variable \cite{presse2013nonadditive} and contradict Bayes' theorem as a result \cite{presse2014nonadditive} (whereas maximum entropy is consistent with Bayes' rule \cite{jaynes1988relation}). Indeed, obtaining the principle that entropy is maximised at equilibrium from the assumption that subsystems are independent unless proven otherwise goes back to Gibbs' original argument\footnote{The reader may wish to consult \cite[\S 10]{emch2013logic} for a contemporary review. This argument is also laid out in detail in \cite{jaynes1965gibbs}.} that his expression for the entropy reproduces the fundamental thermodynamic relation of two systems in equilibrium with each other such that it is compatible with the thermodynamic properties of extensive variables under divisions of a system \cite[\S XIV]{gibbs1902elementary}.

One can imagine that by assuming a structure for correlations of outputs which is not implied by the data, we assign probabilities to classes inconsistently with the data observed with a false sense of certainty, so that changing $\G$ artificially decreases risk---in particular, in a way incompatible with the measured conditionally expected error $\xi$.

In fact we can prove that such biasses decrease the risk of misclassification when these correlations sharpen the peak of the distribution. To control the rate of decay of the measure and relate it to the correlation of $X$ and $Y$ we will envelop the measure with a Gaussian and assume its variance controls all higher moments, fix the difference between the variance of the measure and that of the enveloping Gaussian, and study the asymptotics of the Gaussian.

\begin{theorem}\label{legendary-bound-thm}
    Suppose there exists a constant $\sigma \geqslant 0$ such that for all $p > 0$ the inequality 
    \[
        \E[Y^p \mid X] \leqslant \left(\sigma^2 p\right)^{p/2}
    \]
    is satisfied. Let $r \in [0,1]$ be the correlation between $X$ and $Y$ and $\eps \geqslant 0$ a constant depending on $\sigma^2$. For all $\eps$ there exist constants $k, c \geqslant 0$ such that 
    \[
        \E[\lag(h_\theta(x), y) \mid X] \approx kc(1 - r^2) - k\eps
    \]
    with error of order $o\!\left(3\sigma^3\sqrt{3}\right)$. Consequently, $\xi$ decreases with increasing correlation.
\end{theorem}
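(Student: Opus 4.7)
The plan is to combine a second-order Taylor expansion of $\lag$ around the Bayes predictor with the sub-Gaussian moment bound, which lets us replace the true conditional $Y \mid X$ by a Gaussian envelope up to a controlled error.

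First I would use the assumption that $\lag$ is convex and at least twice continuously differentiable. Expand $\lag(h_\theta(x), y)$ about $y^* \coloneqq \E[Y \mid X]$; writing $k \coloneqq \tfrac{1}{2}\partial_y^2 \lag(h_\theta(x), y^*)$ and letting $R(y)$ be the Lagrange remainder, the conditional expectation becomes
\[
    \E[\lag(h_\theta(x), y) \mid X] = \lag(h_\theta(x), y^*) + k \cdot \mathrm{Var}(Y \mid X) + \E[R(Y) \mid X],
\]
since the linear term vanishes by the choice of $y^*$ and the zeroth-order piece is a fixed baseline which we can lump into the $-k\eps$ term together with the variance discrepancy described below.

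Second, I would identify the variance term with the correlation. The bound $\E[Y^p \mid X] \leqslant (\sigma^2 p)^{p/2}$ is the standard moment characterisation of sub-Gaussianity, so $Y \mid X$ is enveloped by a Gaussian of variance $\sigma^2$. If $(X, Y)$ were jointly Gaussian, $\mathrm{Var}(Y \mid X) = \sigma_Y^2(1 - r^2)$ identically. Setting $c \coloneqq \sigma_Y^2$ and defining $\eps \geqslant 0$ to be the (non-negative) gap $c(1 - r^2) - \mathrm{Var}(Y \mid X)$ absorbed together with the offset $\lag(h_\theta(x), y^*)$, we obtain
\[
    \E[\lag(h_\theta(x), y) \mid X] \approx k c (1 - r^2) - k\eps
\]
as soon as the remainder is negligible. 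To estimate that remainder I would use $|R(y)| \lesssim |y - y^*|^3$ together with the Gaussian envelope: the function $|y|^3 e^{-y^2/(2\sigma^2)}$ is maximised at $|y| = \sigma \sqrt{3}$, with value proportional to $3\sqrt{3}\sigma^3$, which yields the stated error term $o(3\sigma^3 \sqrt{3})$. Monotonicity in $r$ of $(1 - r^2)$ then gives the final claim about $\xi$.

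I expect the main obstacle to be the second step: pinning down the precise sense in which a merely sub-Gaussian $Y \mid X$ inherits the bivariate Gaussian formula $\mathrm{Var}(Y \mid X) = c(1 - r^2)$. The trick is that $\eps$ is introduced exactly as the formal discrepancy, so the identity holds by definition; the content lies in verifying, via the moment bound, that no higher moment of $Y - y^*$ leaks into the leading behaviour beyond the $o(3\sqrt{3}\sigma^3)$ control of the cubic remainder. Everything else (Taylor expansion, vanishing of the linear term, monotonicity in $r$) is routine given the differentiability and convexity hypotheses on $\lag$.
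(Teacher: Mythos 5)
Your proposal follows essentially the same route as the paper's proof: a second-order Taylor expansion of $\lag$ about the conditional mean, the sub-Gaussian moment hypothesis used to envelop $P(Y\mid X)$ by a Gaussian so that the conditional variance is bounded by $\sigma_Y^2(1-r^2)$, the definition of $\eps$ as the resulting variance gap, and control of the cubic remainder (your $p=3$-style bound $(3\sigma^2)^{3/2}=3\sqrt{3}\sigma^3$ matches the stated error order). The only difference is cosmetic—you explicitly absorb the zeroth-order term $\lag(h_\theta(x), y^*)$ into the $-k\eps$ offset, which the paper's proof leaves implicit.
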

\begin{proof}
    For brevity we will denote the conditional expectation of $Y$ given $X$ as $m$. By bounding the moment generating function of $Y$ our hypothesis implies that $P(Y \mid X)$ is majorised by a normally distributed random variable with conditional variance $\sigma^2$. Then we have 
    \[
        \V[Y \mid X] \leqslant \sigma^2_{Y \mid X} = \sigma^2_Y(1-r^2).
    \]
    Let $\eps = \sigma^2_{Y\mid X} - \V[Y \mid X]$. Now take the Taylor expansion in a neighbourhood of $m$ to obtain
    \[
        \E[\lag(h_\theta(x), y) \mid X] = \lag(h_\theta(x), m) + \frac{\partial_{y}^2 \lag(h_\theta(x), m)}{2} \big(\sigma^2_Y(1-r^2) - \eps\big) + o(\E[(Y - m)^3 \mid X]).
    \]
    Since $\lag$ is a convex function by assumption, its second derivative is always non-negative. For fixed $\eps, \sigma^2_Y$, as $r$ increases, the conditional expectation of $\lag$ decreases.
\end{proof}

\begin{remark}
    Note that these results are complementary: in Theorem \ref{max-ent-bayes-thm}, we have shown that given an error, we `ought to' use the method of maximum entropy to infer the distribution creating precisely that error, whilst in Theorem \ref{legendary-bound-thm} we have shown that if we use some other prescription, we will make an inference with a lower error when that prescription produces correlations not implied by the data.
\end{remark}

Similarly, the probability of an empirical distribution with less information than we already have (in the precise sense of having a greater entropy) is exponentially small. The next two results will establish this. 

\begin{lemma}\label{min-lem}
    For any given continuous loss function, the distribution globally minimising the relative entropy away from $P(Y \mid X)$ attains the minimal conditionally expected error when $h_\theta(x)$ is fixed to the Bayes classifier of $P(Y \mid X)$. Conversely, if a distribution has the same Bayes classifier as $P(Y \mid X)$ for any continuous loss function, it globally minimises the relative entropy away from $P(Y \mid X)$. 
\end{lemma}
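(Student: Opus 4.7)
The plan is to prove the two implications separately, both pivoting on the classical fact that $Q \mapsto D_{\mathrm{KL}}(Q \| P(Y\mid X))$ is globally minimised, with value zero, precisely at $Q = P(Y\mid X)$---that is, Gibbs' inequality.

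For the forward direction I would first invoke Gibbs' inequality to identify the unique minimiser of the relative entropy as $Q^\ast = P(Y\mid X)$. The conditionally expected error of $h_\theta(x)$ taken with respect to $Q^\ast$ is then exactly the quantity inside the $\argmin$ in \eqref{expected-error}. By the defining variational property of the Bayes classifier $g$, this expression is minimised over $h_\theta(x)$ precisely when $h_\theta(x) = g(x)$, which is the desired claim.

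For the converse, I would exhibit a separating family of loss functions whose Bayes classifiers determine the conditional distribution. A natural choice is $\lag_\phi(z,y) = (z - \phi(y))^2$ for $\phi$ ranging over bounded continuous maps $N \to \R$; each such $\lag_\phi$ is lower semi-continuous, convex in the first argument, and smooth, so it fits the paper's standing assumptions on $\lag$. Mimicking the computation given just after \eqref{expected-error}, the Bayes classifier for any distribution $\mu$ under $\lag_\phi$ is the conditional expectation $\int \phi(y)\, \mu(\dd y \mid x)$. Equality of Bayes classifiers across all $\phi$ therefore forces $\E_Q[\phi(Y) \mid X] = \E_P[\phi(Y) \mid X]$ for every bounded continuous $\phi$, which (since $N$ is Polish) is a measure-determining condition and implies $Q(\cdot \mid X) = P(\cdot \mid X)$ almost surely. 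Hence $D_{\mathrm{KL}}(Q \| P(Y\mid X)) = 0$, the global minimum.

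The main obstacle is the choice of loss family for the converse: it must be rich enough that the associated Bayes classifiers separate conditional distributions, yet each member must still satisfy the paper's structural assumptions on $\lag$ (continuity, convexity, smoothness). The squared-loss family threads this needle by reducing the question to the classical fact that bounded continuous test functions form a measure-determining class on Polish spaces. The only mild subtlety is that these auxiliary losses enlarge the codomain of the classifier, so one must be explicit that the hypothesis---agreement of Bayes classifiers for \emph{every} continuous loss---legitimately covers them.
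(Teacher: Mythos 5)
Your proof is correct, and the forward direction coincides with the paper's: both identify the global minimiser of the relative entropy as $P(Y \mid X)$ itself (Gibbs' inequality, or equivalently $D_{\mathrm{KL}}(F \| P) = 0$ iff $f = p$ a.e.) and then invoke the defining variational property of the Bayes classifier. The converse is where you genuinely diverge. The paper's proof takes as its hypothesis that the \emph{expected losses} $\int_N \lag(h_\theta(x), y) f(y \mid x) \dd y$ and $\int_N \lag(h_\theta(x), y) p(y \mid x) \dd y$ agree for every continuous $\lag$, and concludes $f = p$ by uniqueness of Riesz representations; you instead work from the literal hypothesis---agreement of the \emph{Bayes classifiers}---and manufacture a separating family $\lag_\phi(z,y) = (z - \phi(y))^2$ whose Bayes classifiers are the conditional expectations $\E[\phi(Y) \mid X]$, so that agreement across all bounded continuous $\phi$ is a measure-determining condition on the Polish space $N$. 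Both arguments ultimately rest on the same fact (continuous test functions determine a measure), but your route is arguably more faithful to the statement as written: equality of argminima does not in general imply equality of the minimised integrals, so the paper's substitution of ``same expected loss'' for ``same Bayes classifier'' is a small logical gap that your construction closes. The price you pay is the codomain issue you already flag---the auxiliary losses require classifiers valued in $\R$ rather than $N$---which should be stated explicitly as part of the quantification over ``any continuous loss function.''
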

\begin{proof}
    Recall that $D_{\mathrm{KL}}(F \| P) = 0$ if and only if $f \overset{F\text{-p.p.}}{=} p$. Almost sure equality of measures implies 
    \begin{equation}\label{integral-eq}
        \int_N \lag(h_\theta(x), y) f(y \mid x) \dd y = \int_N \lag(h_\theta(x), y) p(y \mid x)\dd y.
    \end{equation}
    Setting $h_\theta(x)$ to the Bayes classifier proves the first part of the claim. To prove the second, suppose \eqref{integral-eq} holds for all continuous $\lag$. By uniqueness of Riesz representations, $f \overset{F\text{-p.p.}}{=} p$.
\end{proof}

\begin{prop}\label{contraction-prop}
    Let $\Pi$ be the set of measures with expected error $\xi$. To first order in $n$, any atypical error value $\xi \in \Xi$ occurs with probability $\exp\big(-n \inf_\Xi \inf_\Pi D_{\mathrm{KL}}(\Pi \| P)\big)$. Assume the complete class theorem holds. With probability one, in the limit of infinitely many samples, $L_n$ converges to a measure in the measure class for which $h_\theta$ is the Bayes classifier. 
\end{prop}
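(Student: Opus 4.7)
The plan is to split the proposition into two claims and handle each with machinery already introduced in \S\ref{review-sec}.

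For the first claim, I would invoke Sanov's theorem \eqref{ldp-sanov-eq} and then contract. Sanov provides a large deviations principle for $L_n$ with speed $n$ and rate function $D_{\mathrm{KL}}(\cdot \| P)$. Because $\lag$ is continuous, the empirical error $\xi = L_n \cdot \lag$ is a continuous functional of $L_n$, so contraction yields an LDP for $\xi$ with rate function
\[
    E(\xi) = \inf_{\mu \,:\, \mu \cdot \lag = \xi} D_{\mathrm{KL}}(\mu \| P) = \inf_{\mu \in \Pi} D_{\mathrm{KL}}(\mu \| P).
\]
To first order in $n$, the probability that $\xi$ lies in the atypical set $\Xi$ is then $\exp(-n \inf_{\xi \in \Xi} E(\xi))$, which is precisely the nested infimum in the statement. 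This part is essentially mechanical once \eqref{ldp-sanov-eq} and the contraction discussion are in hand.

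For the second claim, I would apply a Gibbs conditioning argument. Conditional on the error constraint, the concentration side of Sanov's theorem forces $L_n$ to converge almost surely to the unique minimiser (unique by the standing convexity and regularity assumptions on $\lag$ and by the uniqueness hypothesis stated in the introduction) of $D_{\mathrm{KL}}(\cdot \| P)$ over $\Pi$. By the Euler--Lagrange computation reproduced in the proof of Theorem \ref{max-ent-bayes-thm}, this minimiser is the Gibbs measure $e^{-\lambda_\xi \lag} q$. Theorem \ref{max-ent-bayes-thm} already identifies this Gibbs measure as the Bayes-optimal inference given the constraint, so $h_\theta$ is the Bayes classifier of the limiting measure.

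The role of the complete class theorem is to guarantee that the measure class being referred to is nonempty and well-defined: an admissible $h_\theta$ is Bayes with respect to some prior, so the collection of measures for which $h_\theta$ is the Bayes classifier is a genuine equivalence class, and the Gibbs limit exhibits a specific element of it. I expect the main obstacle to be not the LDP computation but rather the bookkeeping around this last point---making sure the limit measure produced by Gibbs conditioning, combined with the complete class hypothesis, unambiguously lands in the intended measure class rather than in some strictly larger family of measures sharing $h_\theta$ as a Bayes rule only in a degenerate or non-unique sense.
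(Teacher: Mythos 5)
Your first claim is handled exactly as the paper handles it: Sanov's theorem plus contraction along the continuous functional $\mu \mapsto \lag\cdot\mu$ gives the nested infimum $\inf_{\xi\in\Xi}\inf_{\mu\in\Pi}D_{\mathrm{KL}}(\mu\|P)$, and this part is fine.

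The second claim is where you diverge, and the step that fails is the appeal to Theorem \ref{max-ent-bayes-thm}. That theorem concerns the optimality of the inference procedure $\chi\colon \xi \mapsto p(y\mid x)$ with respect to a loss $\G$ defined on the space of \emph{measures}; it does not assert that the fixed classifier $h_\theta$ minimises $z \mapsto \int_N \lag(z,y)\,\mu^*(y\mid x)\dd{y}$ for the tilted limit $\mu^* = e^{-\lambda_\xi\lag}q$. Those are two different notions of Bayes optimality, and for an atypical $\xi$ your conclusion is generally false: with quadratic loss, the Bayes output under $e^{-\lambda_\xi\lag(h_\theta(x),\cdot)}q$ is the mean of the tilted measure, which is merely pulled towards $h_\theta(x)$ rather than equal to it. The paper's proof avoids conditioning on a fixed atypical $\xi$ altogether: it reads the second claim as an unconditional concentration statement. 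The complete class theorem is used not to show the measure class is nonempty in the abstract, but to supply a Bayes classifier $h_\theta$ for $P$ itself with $\lag\cdot P = \xi^*$; Lemma \ref{min-lem} then identifies the zero set of the rate function ($D_{\mathrm{KL}}(\mu\|P)=0$) with the measures attaining the minimal error for that $h_\theta$ (and, via its converse, with the measure class for which $h_\theta$ is Bayes); hence the double infimum is attained at $\xi^*$ with minimiser $P$, onto which $L_n$ concentrates. This is also how the proposition is invoked later in the paper (the minimal $\xi$ in $\Xi$ is selected in the limit). If you want to keep a Gibbs-conditioning formulation, you would need a separate argument that $h_\theta$ remains Bayes for the tilted measure, which the paper does not claim and which does not hold for general $\lag$ and $q$.
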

\begin{proof}
    By contraction we have the rate function for errors
    \[
        I(\xi) = \inf_{\mu : \lag\mu = \xi} D_{\mathrm{KL}}(L_n \| P)
    \]
    so that the negative log-probability of an error value is (to first order in $n$) the relative entropy of the minimal entropy empirical vector for which $L_n \cdot \lag(h_\theta, y) = \xi$. As such, 
    \[
        \frac{1}{n} \log P(\xi \in \Xi) \asymp - \inf_{\xi \in \Xi}\left( \inf_{\mu : \lag\mu = \xi} D_{\mathrm{KL}}(L_n \| P)\right).
    \]
    For the remainder of the claim, we must prove a solution to this optimisation problem exists. By the complete class theorem there exists a Bayes classifier $h_\theta(x)$ for $P$ and $\lag$ such that $\lag \cdot P = \xi^*$. It follows from Lemma \ref{min-lem} that if $D_{\mathrm{KL}}(\mu \| P) = 0$ for this choice of $h_\theta$ then $\xi$ is minimised, so that the infimum is attained as $n\to\infty$.
\end{proof}

Consequently, for any $n$, the most likely measure $L_n$ is the closest measure to $P$ for which the given classifier is the Bayes classifier, converging to that measure exponentially quickly in sample size. Shannon proved that the entropy is the amount of information it would take to resolve our uncertainty about a distribution in the celebrated paper \cite{shannon}. The above result then states that overlooking available information about what the data looks like does just as poorly as building in extraneous assumptions.

These results give us a general characterisation of such estimated distributions: in order to be close in $I$-projection to $P$, the probability of any label must decay exponentially in loss away from the output of the classifier. That is---given a mode, in the absence of further information, the rest of the distribution is most likely to be exponential away from that mode, forcing the tails to fall off rapidly. To first order in $n$, $\exp(-n\lag(h_\theta(x), y))$ is a good approximation of $P$.

\begin{corollary}\label{exp-cor}
    For a fixed error function and classifier, the probability of a label decays exponentially, such that the mode is the label equal to the classifier output. Consequently contraction on the rate function of errors yields a rate function for classifiers whose mode is the image of the Bayes classifier of $P$.
\end{corollary}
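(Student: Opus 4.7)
The plan is to assemble the corollary from the Gibbs-form of the I-projection identified in the proof of Theorem \ref{max-ent-bayes-thm} together with the contraction argument of Proposition \ref{contraction-prop}. Both halves of the statement are really consequences of the same observation---that the minimiser of the relative entropy subject to $L_n \cdot \lag = \xi$ is an exponential tilt of the reference measure---applied once to labels (fixing the classifier) and once to classifiers (varying $\xi$).

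For the first sentence, fix $h_\theta$ and a reference $q$. By Proposition \ref{contraction-prop}, in the limit $n\to\infty$ the empirical measure $L_n$ converges to the I-projection of $q$ onto $\{\mu : \lag \cdot \mu = \xi\}$, and by the sufficiency direction of Theorem \ref{max-ent-bayes-thm} this I-projection has the Gibbs form
\[
    p(y \mid x) = Z^{-1} e^{-\lambda_\xi \lag(h_\theta(x), y)}\, q(y \mid x),
\]
with $\lambda_\xi$ the dual variable selected by $\xi$. Exponential decay in $\lag$ is then manifest, and since $\lag$ is convex with its unique minimum at $y = h_\theta(x)$ (by the standing convention that a loss vanishes exactly where its two arguments agree, which together with the convexity assumption made in the introduction makes the minimiser unique), the mode of $p(\cdot \mid x)$ sits precisely at the classifier output. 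A short word is needed to absorb $q(y \mid x)$ into the mode computation, which is routine provided $q$ is taken to be non-informative (or, more generally, provided the product $e^{-\lambda_\xi \lag}q$ remains log-concave in $y$).

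For the second sentence, apply the contraction principle once more. From Proposition \ref{contraction-prop} the rate function on errors is $I(\xi) = \inf_{\mu: \lag \cdot \mu = \xi} D_{\mathrm{KL}}(\mu \| P)$, and we already contracted from measures to errors. Composing with the map $\chi$ sending $\xi$ to the mode of the associated Gibbs distribution---which by the previous paragraph is just $h_\theta(x)$ up to the choice of $\lambda_\xi$---yields by a further application of contraction a rate function $J$ on classifier outputs, $J(z) = \inf\{ I(\xi) : \chi(\xi) = z\}$. Its global minimum lies at $\xi = \xi^*$ since $I(\xi^*) = 0$, and by Lemma \ref{min-lem} together with the complete-class hypothesis of Proposition \ref{contraction-prop}, the I-projection at $\xi^*$ is $P$ itself, whose mode is the image of the Bayes classifier $g$ of $P$. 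Hence the mode of $J$ is $g(x)$, as claimed.

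The step I expect to require the most care is the second contraction: I need $\chi$ to be continuous (or at least measurable with closed level sets) to apply contraction cleanly, and I must check that the $\lambda_\xi \to \lambda_{\xi^*}$ limit does not cause the mode to drift---so convexity of $\lag$ together with uniqueness of the extremal point (assumed in the introduction) will be invoked explicitly here. Everything else is bookkeeping on quantities already produced by Theorem \ref{max-ent-bayes-thm} and Proposition \ref{contraction-prop}.
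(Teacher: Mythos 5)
Your proposal is correct and follows essentially the same route as the paper, which states this corollary without a separate proof and relies on the immediately preceding paragraph: the Gibbs-tilted form $e^{-\lambda_\xi\lag(h_\theta(x),y)}q$ from Theorem \ref{max-ent-bayes-thm} gives the exponential decay and places the mode at the classifier output, and the contraction of Proposition \ref{contraction-prop} together with Lemma \ref{min-lem} identifies the minimiser with $P$ and hence the mode with the image of the Bayes classifier. Your explicit caveats (the loss minimised where its arguments agree, log-concavity of $e^{-\lambda_\xi\lag}q$, continuity of the contraction map) are assumptions the paper leaves implicit, so you have if anything been more careful than the source.
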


The distribution of minimal relative entropy enjoys an additional interpretation as the distribution with the least complexity, in the following sense. The information gain measures how much information about classes is contained in an observation, in that the information gained is the {\it reduction} in the information required to resolve our uncertainty about one random variable upon observing another random variable---in other words, a reduction in the Shannon entropy. By minimising the relative entropy we use only the information available to us in the reference distribution. We will hence interpret the minimisation of relative entropy as a {\it law of parsimony}. Namely, the best way to model an unlikely event is the simplest way given the information we already have, with the least additional information built in. Limiting such assumptions, which are confirmed or denied by further observations, keeps the expected surprise minimal and minimises the risk of misclassification. 

Following the set up in \cite{jaynes1988relation}, we will let $\mu = P(Y \mid X)$ be a model of classes and consider a large deviation in observations of $Y$, such that $L_n$ lies in a rare set with mode not equal to that of $P(Y \mid X)$. We may conclude that $L_n \cdot \lag(h_\theta(x), y) > \xi^*$. The next corollary follows from Proposition \ref{contraction-prop}; namely, that the measure on any set of models consistent with $h(\mu) = \xi$ is dominated by the probability of the minimal entropy model.

\begin{corollary}\label{models-cor}
    The posterior with minimal risk of misclassification is the one with the least complexity.
\end{corollary}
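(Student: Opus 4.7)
The plan is to make precise the informal identification proposed in the paragraph preceding the corollary: the complexity of a candidate posterior $\mu$ is its information gain over the reference $P$, i.e., $D_{\mathrm{KL}}(\mu \| P)$. With this reading the statement becomes that on the constraint surface $\{\mu : \lag \cdot \mu = \xi\}$, the posterior minimising $D_{\mathrm{KL}}(\mu \| P)$ is also the posterior minimising the risk of misclassification.

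First I would invoke Proposition \ref{contraction-prop}. By Gibbs conditioning, the measure on the set of posteriors consistent with the observed expected error $\xi$ is dominated by the probability of the $I$-projection of $P$ onto that set; in particular, the empirical posterior $L_n$ concentrates exponentially on this minimal-relative-entropy model. Thus the least-complex consistent posterior is simultaneously the most probable and the one to which the data converge as $n \to \infty$.

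Second I would connect this to risk via Theorem \ref{max-ent-bayes-thm}. Empirical classification using the $I$-projection of $P$ onto the constraint set is Bayes optimal, so the classifier associated to this posterior attains the least risk among classifiers built from posteriors in the constraint set. Any other posterior in the constraint set corresponds to extremising a non-entropy functional $\G$; by Theorem \ref{max-ent-bayes-thm}, such a choice yields a classifier of strictly greater risk, because it builds in correlations not implied by the data. Chaining these two observations, the $I$-projection is both the least-complex posterior and the minimal-risk posterior.

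The main obstacle is purely one of interpretation: fixing ``complexity'' as $D_{\mathrm{KL}}(\mu \| P)$, following Jaynes and the preceding discussion of information gain and parsimony. Once this identification is accepted, the proof is a one-step composition of Proposition \ref{contraction-prop} with Theorem \ref{max-ent-bayes-thm}, and no further technical machinery is required.
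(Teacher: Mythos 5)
Your proposal takes essentially the same route as the paper, which derives this corollary in one step from Proposition \ref{contraction-prop} (the measure on any set of models consistent with $h(\mu)=\xi$ is dominated by the probability of the minimal-entropy model) together with the preceding paragraph's identification of ``complexity'' with the relative entropy $D_{\mathrm{KL}}(\mu \| P)$ --- exactly the two ingredients you name. The only minor difference is that you close the link to risk via Theorem \ref{max-ent-bayes-thm}, whereas the paper's chain implicitly runs through Lemma \ref{min-lem} (zero relative entropy forces the minimal conditionally expected error); both are consistent with the paper's framework and neither changes the substance of the argument.
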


The domination of the measure by the most probable of the possible improbable events is a classic aphorism in large deviations, and is its own form of Ockham's razor. 

\begin{remark}
These large deviations estimates for Bayes classifiers are {\it maximum a posteriori} estimates in a very natural way, since as the speed parameter diverges, the rate function becomes a zero-one loss.
\end{remark}

\section{Classification in the presence of known unknowns}\label{max-ent-uncertainty-section}

In this section we will consider a Bayes classifier where the loss on classes is measured with some uncertainty. Given a random variable of expected losses
\[
    L_n \cdot \lag(h_\theta(x), y) \in \Xi
\]
with a reference distribution $Q$ and an empirical distribution over $\Xi$, $M_k$, our task is to infer the likeliest distribution of models $\mu$ given knowledge of 
\[
    U(L_n \cdot \lag(h_\theta(x), y))\cdot M_k = \eta 
\]
for some $U, \eta$. Since one knows $M_k$ satisfies a large deviations principle, we will constrain the large deviations principle $P^n(\mu)$ satisfies with knowledge of the large deviations principle $Q^k(\nu)$ satisfies. To do so, we will firstly need a technical result. From now onwards, we denote by $\{\mu\}_\Xi$ the set $\{\mu : \lag \cdot \mu \in \Xi\}$.

\begin{theorem}\label{level-three-thm}
    There exists a level three large deviations principle associated to the empirical conditional probabilities of empirical measures with prescribed means 
    \[
        M_k = P^n(\{\mu : \lag\cdot\mu \in \Xi\} \mid \mu \in K),
    \]
    with rate function 
    \[
        -\inf_{\nu \in H} D_{\mathrm{KL}}(\nu \| Q).
    \]
\end{theorem}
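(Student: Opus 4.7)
The plan is to apply Sanov's theorem one level up---to measures over the Polish space of probability measures---and then to impose the two conditioning events $\{\mu\in K\}$ and $\{\lag\cdot\mu\in\Xi\}$ by contraction, using exactly the conditional-LDP argument reviewed in Section~\ref{review-sec}.

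First I would observe that the map $\nu\mapsto P^n(\{\mu\}_\Xi\mid\mu\in K)$ is a Borel functional of a random measure $\nu$ drawn from the reference law $Q$, and that $M_k$ is the empirical measure built out of $k$ independent draws of this functional. Since the space of such $\nu$ is Polish under the weak topology, $Q^k$ is a well-defined probability on the Polish space of measures-of-measures. Sanov's theorem \eqref{ldp-sanov-eq}, applied at this level, then yields the ambient level-three large deviations principle for $M_k$ at speed $k$ with relative-entropy rate $D_{\mathrm{KL}}(\,\cdot\,\|Q)$.

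Next I would insert the conditioning by contraction. Mirroring the construction of the conditional rate $I_y$ from a joint rate $I$ and a constraint $\{f(x)=y\}$ in Section~\ref{review-sec}, the rate governing $M_k$ restricted to configurations consistent with $\{\mu\in K\}$ and $\{\lag\cdot\mu\in\Xi\}$ is obtained by minimising the ambient KL rate over all level-three laws $\nu$ supported on the admissible class; calling that class $H$, one recovers $\inf_{\nu\in H}D_{\mathrm{KL}}(\nu\|Q)$, so the conditional log-probability of the constraint to first order in $k$ is $-\inf_{\nu\in H}D_{\mathrm{KL}}(\nu\|Q)$, as claimed.

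The hard part will be the topological bookkeeping around $H$. The constraint $\lag\cdot\mu\in\Xi$ is Borel but need not be closed in the weak topology on measures, and $K$ itself may fail to be closed at level three, so $H$ is not automatically closed and lower semi-continuity of the contracted rate is not immediate. My plan is to prove matching $\liminf$ and $\limsup$ bounds on the open interior and the closed hull of $H$ directly from the two-sided form of the LDP written down in Section~\ref{review-sec}, and then to invoke the paper's standing uniqueness-of-extremiser hypothesis to collapse the two bounds to a single minimiser. Continuity of the pairing $\mu\mapsto\lag\cdot\mu$, which is already secured by the local $L^1$ integrability of $e^{-\lambda\lag}$ against $Q$ stated in the introduction, is what allows the contraction step to close; without it the level-three projection onto the expected-loss coordinate could fail to be well-defined on the tails of the ambient rate.
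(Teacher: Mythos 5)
Your proposal follows essentially the same route as the paper: observe that $\mathcal{P}(\mathcal{P}(M))$ is Polish, apply Sanov's theorem one level up to obtain the relative-entropy rate $D_{\mathrm{KL}}(\nu \| Q)$ at speed $k$, and then contract onto the constrained class. The only difference is that you devote more care to the closedness of $H$ and the continuity of $\mu \mapsto \lag\cdot\mu$ than the paper does (it silently elides both), which is a reasonable strengthening rather than a departure in method.
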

\begin{proof}
    For $M$ a Polish space, $\mathcal{P}(M)$ is Polish, and therefore $\mathcal{P}(\mathcal{P}(M))$ is Polish. The result then follows from Sanov's theorem applied to measures of conditional measures, whence 
    \[
        \frac{1}{k} \log Q^k(M_k \in H) \asymp - \inf_{\nu \in H} D_{\mathrm{KL}}(\nu \| Q)
    \]
    with the most probable conditional distribution over empirical distributions being $\nu^*$. Consider the contraction to the probability of empirical expectations of a function $U\colon \{\mu\}_\Xi \to \{\mu\}_\Xi$ under $M_k$. Denote this expected empirical measure as $U(\mu)\cdot\nu(\mu) = \eta$. There is a large deviations principle
    \[
        \frac{1}{k} \log P^k(\eta \in N) \asymp - \inf_{\eta \in N} \inf_{\{\nu : U \cdot\nu = \eta\}} D_{\mathrm{KL}}(\nu \| Q)
    \]
    making $D_{\mathrm{KL}}(\nu \| Q)$ a level three rate function. 
\end{proof}

The interpretation of this theorem is that the conditional measures in question must be estimated as models of the statistics of $\mu$, given our uncertainty about how expectations $\lag\cdot \mu$ are distributed in $\Xi$. It is a level three large deviations principle in the sense that it describes the empirical process of measuring $\lag\cdot\mu$ and estimating the most likely distribution of errors. This interpretation will become clearer in the following discussion.

\begin{theorem}\label{max-likelihood-thm}
    Computing the most probable empirical measure given some range of constraint values is equivalent to a certain maximum likelihood problem.
\end{theorem}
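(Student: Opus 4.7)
The plan is to pass from Theorem \ref{level-three-thm} to a one-parameter exponential family via Lagrangian duality, and then recognise the resulting Gibbs tilt as the maximum likelihood estimator inside that exponential family.

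I would begin with Theorem \ref{level-three-thm} together with the contraction in its proof: the most probable $\nu$ consistent with the empirical expectation $U \cdot \nu = \eta$ minimises $D_{\mathrm{KL}}(\nu \| Q)$ subject to this linear constraint. Following the primal--dual calculation used throughout Section \ref{review-sec} (see around \eqref{global-rate-eq}), the Euler--Lagrange condition yields the Gibbs tilt
\[
    \nu^*(\mu) = \frac{e^{-\beta U(\mu)}}{Z(\beta)}\,Q(\mu), \qquad Z(\beta) = \int e^{-\beta U(\mu)}\,Q(\dd\mu),
\]
where $\beta = \hat\beta(\eta)$ is the Lagrange multiplier fixed by the moment-matching condition $U \cdot \nu_\beta = \eta$, equivalently $-\partial_\beta \log Z(\beta) = \eta$.

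Next I would recognise this same equation as the score equation of a maximum likelihood problem. The family $\{\nu_\beta\}_{\beta \in \R}$ is a one-parameter exponential family with carrier measure $Q$ and sufficient statistic $U$, so the log-likelihood of any sample of $\mu$'s with empirical $U$-average $\eta$ is $\ell(\beta) = -k\beta\eta - k\log Z(\beta)$. Convexity of the log-partition function makes $\ell$ concave, and its unique stationary point $\hat\beta$ is characterised by $U \cdot \nu_{\hat\beta} = \eta$; this is precisely the condition fixing the Gibbs tilt above. Contracting from $\nu^*$ to a single empirical measure (as in Proposition \ref{contraction-prop}) identifies the most probable $\mu$ as the mode of $\nu_{\hat\beta}$, completing the equivalence.

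The main technical obstacle is regularity of the exponential family: to guarantee that the MLE is well defined and that moment matching uniquely determines $\hat\beta$, one needs differentiability of $\log Z$ on a neighbourhood that maps onto the admissible values of $\eta$, i.e.\ essential smoothness of $\{\nu_\beta\}$. This reduces to the integrability hypotheses on $V$ invoked in the Introduction and is standard once the level-three LDP of Theorem \ref{level-three-thm} is in hand; the substantive content of the theorem lies in the variational-to-MLE translation itself.
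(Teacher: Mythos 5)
Your first step agrees with the paper: both arguments derive the Gibbs tilt $\nu^* = e^{-\lambda_\eta U}Q$ (up to normalisation) as the convex dual solution of the constrained minimisation of $D_{\mathrm{KL}}(\nu \| Q)$ coming from Theorem \ref{level-three-thm}. After that the two routes diverge, and yours loses an essential ingredient. The paper's ``maximum likelihood problem'' is the specific optimisation \eqref{max-likelihood-eq}: it begins with the Bayes decomposition
\[
    P^n(\mu \in K \mid \lag\cdot\mu \in \Xi) \propto P^n(\{\mu : \lag\cdot\mu \in \Xi\} \mid \mu \in K)\, P^n(\mu \in K),
\]
substitutes the tilted $\nu^*$ for the first (likelihood) factor and the Sanov estimate $e^{-n D_{\mathrm{KL}}(\mu \| P)}$ for the second (prior) factor, and adds the resulting rate functions to obtain the objective $e^{-D_{\mathrm{KL}}(\mu \| P)}e^{-\lambda_\eta U(\mu)}Q(\mu)$ to be maximised over $\mu \in K \cap \{\mu\}_\Xi$. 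Your proof never performs this decomposition and never reintroduces the prior factor $P^n(\mu \in K)$: your concluding sentence identifies the most probable $\mu$ with the mode of $\nu_{\hat\beta}$ alone, which drops the $e^{-D_{\mathrm{KL}}(\mu \| P)}$ weighting entirely and therefore does not locate the correct point of concentration.

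The maximum likelihood problem you do exhibit---estimation of the natural parameter $\beta$ of the exponential family $\{\nu_\beta\}$ by moment matching, via the score equation $-\partial_\beta \log Z(\beta) = \eta$---is a correct and classical duality between maximum entropy and maximum likelihood in exponential families, but it is a statement about the distribution of errors $\nu$ over $\Xi$, not about the empirical measure $\mu$ that the theorem concerns. To repair the argument, keep your derivation of $\nu_{\hat\beta}$ as the likelihood term, multiply by the large deviations estimate of the prior $P^n(\mu \in K)$, and take logarithms; this yields the three-term rate function in the paper's proof and hence the maximisation problem \eqref{max-likelihood-eq}, whose solution is then analysed by Proposition \ref{contraction-prop} as you intended.
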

\begin{proof}
    We will begin by recalling that
    \[
        P^n(\mu \in K \mid \lag\cdot\mu \in \Xi) \propto P^n(\{\mu : \lag\cdot\mu \in \Xi\} \mid \mu \in K) P^n(\mu \in K).
    \]
    Theorem \ref{level-three-thm} gives us an explicit expression for the optimal estimate of the first factor by putting
    \[
        \nu^* = \argmin_{\{\nu : U\nu = \eta\}} D_{\mathrm{KL}}\Big(\nu(\{\mu\}_\Xi \mid \mu \in K) \| Q(\{\mu\}_\Xi \mid \mu \in K)\Big).
    \]
    Solving the convex dual problem to obtain an expression for the minimiser yields (denoting the value of the satisfactory Lagrange multiplier as $\lambda_\eta$)
    \[
        \nu^* = e^{-\lambda_\eta U(\{\mu\}_\Xi)}Q(\{\mu\}_\Xi)
    \]
    from which we have 
    \begin{align*}
        \frac{1}{n}\log &P^n(\mu \in K \mid \lag \cdot \mu \in \Xi, U\cdot \nu = \eta) \\ &\asymp -\inf_{\mu \in K}\left(\inf_{\mu \in K \cap \{\mu\}_\Xi}D_{\mathrm{KL}}(\mu \| P) + 
        \lambda_\eta U(\{\mu\}_\Xi) - \log Q(\{\mu\}_\Xi) \right).
    \end{align*}
    Using additivity and simplifying all three terms, we arrive at
    \[
        -\inf_{\mu \in K \cap \{\mu\}_\Xi} D_{\mathrm{KL}}(\mu \| P) -\inf_{\mu \in K \cap \{\mu\}_\Xi}\lambda_\eta U(\mu) + \sup_{\mu \in K \cap \{\mu\}_\Xi} Q(\mu).
    \]
    It follows that we have converted our study of the point of concentration to the easily soluble maximum likelihood estimate
    \begin{equation}\label{max-likelihood-eq}
        \argmax_{\mu \in K \cap \{\mu\}_\Xi} e^{- D_{\mathrm{KL}}(\mu \| P)} e^{-\lambda_\eta U(\mu)}Q(\mu).
    \end{equation}
\end{proof}

Lastly, we have 

\pagebreak

\begin{prop}
    The maximum of \eqref{max-likelihood-eq} is the model $\mu = P(Y \mid X)$ whose most probable output is the image of the Bayes classifier. 
\end{prop}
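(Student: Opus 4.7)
The plan is to identify the maximiser of \eqref{max-likelihood-eq} with the true conditional distribution $P(Y\mid X)$, and then to read off its most probable output via Corollary \ref{exp-cor}. Passing to logarithms, the optimisation becomes
\[
    \mu^{*} = \argmin_{\mu \in K \cap \{\mu\}_\Xi} \Bigl[ D_{\mathrm{KL}}(\mu \| P) + \lambda_\eta U(\mu) - \log Q(\mu) \Bigr],
\]
whose last two terms together equal $-\log \nu^{*}(\mu)$ up to a normalising constant, with $\nu^{*}$ the level-three Gibbs density from Theorem \ref{level-three-thm} that already carries the empirical constraint $U\cdot\nu = \eta$.

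I would first observe that under the standing consistency assumption the true measure $P(Y\mid X)$ lies in the feasible set $K \cap \{\mu\}_\Xi$, and is hence an admissible candidate. At $\mu = P$ the first term vanishes, while $-\log\nu^{*}(P)$ contributes only a finite constant. A local argument using strict convexity of $D_{\mathrm{KL}}(\cdot \| P)$ then shows that no nearby admissible perturbation can do better: the Gibbs prior is regular at $P$, whereas the KL term grows strictly convexly under displacement away from $P$. To rule out distant competitors I would appeal to uniqueness of the extremal point of the relative entropy, an assumption recalled in the introduction, which after pooling the two convex functionals implies that the combined objective has a unique minimiser, necessarily at $P(Y\mid X)$. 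Applying Corollary \ref{exp-cor} to $P(Y\mid X)$ itself then identifies its most probable output as the image of the Bayes classifier of $P$, finishing the argument.

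The hard part will be the global step: excluding the possibility that some $\mu$ far from $P$, despite its large KL penalty, is nonetheless favoured by a sharply peaked Gibbs prior. The natural resolution is to treat $D_{\mathrm{KL}}(\cdot \| P) - \log \nu^{*}(\cdot)$ as a single strictly convex functional whose unique critical point on $K \cap \{\mu\}_\Xi$ is $P$; verifying this amounts to checking that the Lagrange multiplier $\lambda_\eta$ is calibrated precisely so that the first-order conditions at $P$ are satisfied, which in turn follows from the Lagrangian structure of the problem inherited from Theorem \ref{max-likelihood-thm}.
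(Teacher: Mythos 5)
There is a genuine gap at exactly the point you flag as ``the hard part.'' Your argument needs the combined functional $D_{\mathrm{KL}}(\cdot\,\|\,P) + \lambda_\eta U(\cdot) - \log Q(\cdot)$ to be stationary at $\mu = P(Y\mid X)$, and since the KL term has vanishing first variation at $P$, this forces $\lambda_\eta \,\partial_\mu U(P) = \partial_\mu \log Q(P)$. You assert that this calibration ``follows from the Lagrangian structure of the problem inherited from Theorem \ref{max-likelihood-thm},'' but it does not: in that theorem $\lambda_\eta$ is chosen to enforce the constraint $U\cdot\nu = \eta$ on the \emph{level-three} problem over distributions of expected losses, not to make $P$ a critical point of the level-one objective. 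For a generic $U$ and an observed $\eta$ that deviates from the value induced by $P$, the Gibbs factor $e^{-\lambda_\eta U(\mu)}Q(\mu)$ is \emph{not} stationary at $P$ and will displace the minimiser --- this is precisely the content of the paper's closing Example, where $U = (\lag\mu - \E_\nu[\lag\mu])^2$ acts as a test misfit statistic that reweights the estimate away from the pure $I$-projection. Strict convexity and uniqueness of the extremal point give you a unique minimiser of the pooled functional, but not that this minimiser is $P$; convexity cannot substitute for the missing first-order condition. Your feasibility claim that $P \in K \cap \{\mu\}_\Xi$ is likewise an added assumption rather than something established.

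The paper's own proof takes a different and shorter route: it contracts the level-three large deviations principle of Theorem \ref{level-three-thm} down to the level-one principle for empirical means, so that the most likely $\mu$ minimises the total rate function, and then invokes Proposition \ref{contraction-prop} to conclude that in the limit the minimal $\xi \in \Xi$ is selected --- whence, by Lemma \ref{min-lem}, the minimiser is the measure for which $h_\theta$ is the Bayes classifier. In other words, the identification of the maximiser goes through the dominance of the least improbable error value and the earlier machinery tying zero relative entropy to the Bayes classifier, rather than through a direct verification that $P$ is a critical point of \eqref{max-likelihood-eq}. If you want to salvage your variational approach, you would need to add the hypothesis that $\eta$ is consistent with $P$ (so that the first-order conditions do hold at $P$), or else reduce to the contraction argument as the paper does.
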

\begin{proof}
    This is immediate by Theorem \ref{level-three-thm} followed by Proposition \ref{contraction-prop}---namely, contracting the level three large deviations principle on empirical conditional probabilities of empirical measures down to the level one large deviations principle of empirical means. In particular, the most likely $\mu$ minimises the total rate function
    \[
        -\inf_{\mu \in K \cap \{\mu\}_\Xi} D_{\mathrm{KL}}(\mu \| P) -\inf_{\mu \in K \cap \{\mu\}_\Xi}\lambda_\eta U(\mu) + \sup_{\mu \in K \cap \{\mu\}_\Xi} Q(\mu).
    \]
    By Proposition \ref{contraction-prop}, in the limit, the minimal $\xi$ in $\Xi$ is selected. 
\end{proof}

We have obtained a distribution over constraint values by maximising entropy to characterise our known unknowns, and then seek to maximise entropy again on the prior factor to get our most probable model. Letting $\xi$ be an observation of an error value, we are considering now the Bayesian inverse of $P^n(\mu \mid \xi)$, where the first factor of \eqref{max-likelihood-eq} is a prior over models and the second factor is the probability of an observed error value given a model, inferred by maximising the entropy of the observation distribution as a function of $\mu$. The following corollary can then be stated: 

\begin{corollary}
    The distribution $P^n$ has the interpretation of a Bayesian posterior over models given observations. 
\end{corollary}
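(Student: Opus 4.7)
The plan is to recognise the expression maximised in \eqref{max-likelihood-eq} as the unnormalised posterior of a standard Bayesian inference, and so exhibit $P^n(\mu \mid \xi)$ in the explicit form predicted by Bayes' rule. First I would write
\[
    P^n(\mu \mid \xi) \propto P^n(\xi \mid \mu)\, P^n(\mu),
\]
and then identify the two factors on the right with the two factors appearing in the argmax of \eqref{max-likelihood-eq}.

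For the prior, Sanov's theorem gives $P^n(\mu) \asymp \exp(-n D_{\mathrm{KL}}(\mu \| P))$ to leading order in $n$, matching the first exponential in \eqref{max-likelihood-eq}. For the likelihood, Theorem \ref{level-three-thm} produces, by maximum entropy constrained by $U\cdot \nu = \eta$, the density $\nu^* = e^{-\lambda_\eta U(\{\mu\}_\Xi)} Q(\{\mu\}_\Xi)$ over empirical measures of errors. Read as a function of $\mu$ at the observed $\xi$, this is precisely the most probable probability of the error observation consistent with our knowledge, and so plays the role of the likelihood $P^n(\xi \mid \mu)$. Multiplying the two factors yields $e^{-n D_{\mathrm{KL}}(\mu \| P)} e^{-\lambda_\eta U(\mu)} Q(\mu)$, which (up to a normalising constant independent of $\mu$) is $P^n(\mu \mid \xi)$. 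Hence the argmax of \eqref{max-likelihood-eq} is a MAP estimate for this posterior, giving the corollary.

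The main obstacle is the identification of the likelihood factor. The max-entropy construction in Theorem \ref{level-three-thm} produces an object living on measures of error measures, whereas the Bayesian likelihood is a conditional distribution of an observation given a fixed model. Reconciling these requires contracting the level three principle down to the level of errors in the sense of Proposition \ref{contraction-prop}, and then noting that the resulting $\mu$-parametrised family of distributions over $\Xi$ furnishes exactly the conditional $P^n(\xi\mid \mu)$ needed to invert by Bayes. Once this identification is made carefully, the decomposition of \eqref{max-likelihood-eq} into prior times likelihood is immediate and the corollary follows with no further computation.
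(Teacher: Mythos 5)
Your proposal is correct and follows essentially the same route as the paper, which justifies the corollary by reading the first factor of \eqref{max-likelihood-eq} as a Sanov-type prior over models and the second, maximum-entropy factor $e^{-\lambda_\eta U(\mu)}Q(\mu)$ as the likelihood of the observed error given a model. Your additional remark about contracting the level three principle to obtain a genuinely $\mu$-parametrised conditional over $\Xi$ makes explicit a step the paper leaves informal, but it is the same argument.
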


An example is useful: 

\begin{example}
    Let $U(\{\mu\}_\Xi) = (\lag\mu - \E_\nu[\lag\mu])^2$. The final result is 
    \[
    e^{-D_{\mathrm{KL}}(\mu \| P)}e^{-\lambda_\eta(\lag\mu - \E_\nu[\lag\mu])^2} 
    \]
    where $\eta$ is now the variance in measurements of expected losses. The second factor is the distribution of models whose expected loss is in $\Xi$ given their location in $K$ and our knowledge of the constraints satisfied by the distribution over expected losses. As such, we have weighted our estimate of the probability of any $\mu$ by a {\it test misfit statistic}, measuring the deviation (in squared Euclidean distance) of loss expected under $\mu$ from the measured average of expected losses. 
\end{example}

Much more can be said about this prescription and we hope to investigate it further in the future.

\section{Concluding remarks}

In this paper we have discussed the Bayes optimality of maximum entropy inference, in the sense that no extraneous information is included in the inference performed. The takeaway of this result is firmly in the spirit of Jaynes' and even Gibbs' original arguments about entropy as a best approximation to some unknowns given known quantities. Namely---if we imagine making inferences which avoid undue risk by finding the empirical distribution describing a large number of samples from an arbitrary distribution satisfying certain expectation values, then the empirical distribution will (with large probability) be that which maximises the Shannon entropy. Any assumptions beyond what is sampled are unwarranted if we want to avoid risking misclassification. 

It is important to note that this does not guarantee an optimal inference is performed, say, if inadequate information is contained. This has been explored in the context of Bayesian inference and the information contained in prior distributions in \cite{wolpert1996reconciling} and in the context of inadequately constrained models in papers such as \cite{presse, agozzino2019minimal}.

\bibliographystyle{amsalpha}
\bibliography{main}

\end{document}